\documentclass[12pt,reqno]{article}
\usepackage{graphicx}
\usepackage{amssymb}
\usepackage{amsmath}
\usepackage{amsthm}
\usepackage{amsfonts}
\usepackage{url}
\usepackage{hyperref}
\usepackage{breakurl}

\newcommand{\N}{{\mathbb N}}

\newtheorem{theorem}{Theorem}
\newtheorem{corollary}{Corollary}

\newtheorem{definition}{Definition}
\newtheorem{remark}{Remark}

\begin{document}
\bibliographystyle{plain}
\title{~\\[-20pt] 
Bounds on minors of binary matrices}
\author{Richard P. Brent\\
Australian National University\\
Canberra, ACT 0200,
Australia\\
\href{mailto:minors@rpbrent.com}{\tt minors@rpbrent.com}
\and
Judy-anne H. Osborn\\
The University of Newcastle\\
Callaghan, NSW 2308,
Australia\\
\href{mailto:Judy-anne.Osborn@newcastle.edu.au}%
{\tt Judy-anne.Osborn@newcastle.edu.au}
}

\date{}	

\maketitle
\thispagestyle{empty}                   

\begin{abstract}
We prove an upper bound on sums of squares of minors of $\{+1, -1\}$
matrices. The bound is sharp for Hadamard matrices, a result
due to de Launey and Levin (2009), but our proof is simpler.
We give several corollaries relevant to minors of Hadamard matrices,
and generalise a result of Tur\'an on determinants of random $\{+1,-1\}$
matrices.
\end{abstract}

\pagebreak[3]
\section{Introduction}		\label{sec:intro}

A $\{+1, -1\}$-matrix (abbreviated ``$\{\pm 1\}$-matrix'' below) is
a matrix $A$ whose elements are $+1$ or $-1$. We consider $n\times n$
$\{\pm 1\}$-matrices; $n$ is called the \emph{order} of the matrix.
A \emph{minor of order $m$} is the determinant of an $m\times m$ submatrix
$M$ of $A$.

Theorem~\ref{thm:mean_detsq} gives an upper bound on the mean square
of the minors of order $m$ of any $\{\pm 1\}$ matrix $A$
of order $n \ge m$.  The upper bound is attained if $A$ is a Hadamard
matrix, and this case was proved by de Launey and
Levin~\cite[Proposition~2]{LL}. Our proof, using the Cauchy-Binet
formula~\cite{Gantmacher,Muir}, 
is much simpler than the proof given for the Hadamard case 
by de Launey and Levin,
which requires consideration of the cycle structure of random
permutations and an identity involving Stirling numbers.

In \S\ref{sec:cor}
we give several easy corollaries of Theorem~\ref{thm:mean_detsq}. 

Corollary~\ref{cor:m_factorial} shows that, in the mean square sense, the
minors of Hadamard matrices are strictly larger than the minors of random
$\{\pm1\}$-matrices, except for the trivial case of minors of order~$1$.

A difficult, not yet completely solved, problem is to find the 
asymptotic behaviour of the
probability that a random $\{\pm1\}$-matrix of order~$n$ is
singular, see~\cite{Kahn,TV}.
In Corollary~\ref{cor:ineq_YZ} we consider a simpler but analogous problem
concerning zero minors of $\{\pm1\}$-matrices.
The corollary  gives a lower bound on the number of zero 
minors of order $m$ of a $\{\pm1\}$ matrix of order~$n$.  The bound is
nontrivial in the cases $2 \le m \le 6$.

Corollary~\ref{cor:singular_submats} gives a criterion for when a
$\{\pm1\}$ matrix must have singular minors of small order,
and a lower bound on their number. In some cases
the result is sharper than that obtained by a standard argument
using Dirichlet's ``pigeon-hole'' principle.

Corollary~\ref{cor:small_minors} gives exact
formul{\ae} for the number of zero minors of orders $2$ and $3$ in Hadamard
matrices.  The formula for minors of order~$2$ is implicit in a paper of
Little and Thuente~\cite{LT}, but the result for minors of order~$3$ 
appears to be new.

Finally, Theorem~\ref{thm:Turan-gen1} generalises a well-known result
of Tur\'an~\cite{Turan} on the mean-square determinant of a random
$\{\pm1\}$-matrix.

For simplicity, in \S\S\ref{sec:data}--\ref{sec:cor} we consider only
minors of square $\{\pm1\}$-matrices. The results can be extended
without difficulty to minors of rectangular matrices, 
say $n \times p$ $\{\pm1\}$-matrices 
with minors of order $m \le \min(n,p)$.
It is also possible to extend some of the results to rectangular submatrices
$M$, say $m\times m'$, where $m\le m'$, if
$|\!\det(M)|^2$ is replaced by $\det(MM^T)$.

\section{The mean square of minors}\label{sec:data}

Theorem~\ref{thm:mean_detsq} gives
an upper bound on the mean square of minors of $\{\pm 1\}$
matrices. The bound is sharp because it is attained for Hadamard matrices.
For the case that the matrix $A$ is a Hadamard matrix, the result
is due to de Launey and Levin~\cite[Proposition~2]{LL},
and their proof could perhaps be modified to show that strict inequality 
occurs when $A$ is not a Hadamard matrix. However,
we give a different and simpler proof.
\begin{definition}
If $A$ is a $\{\pm 1\}$ matrix and $m \in \N$, then
$S_m(A)$ is the set of all $m \times m$ submatrices of $A$.
\end{definition}
\begin{theorem}	\label{thm:mean_detsq}
Let $A$ be a square $\{\pm 1\}$ matrix of order $n \ge m > 1$.
Then the mean value
$E(\det(M)^2) $
of $\det(M)^2$, taken over all $M \in S_m(A)$, satisfies
\begin{equation}	\label{eq:mean_sq_upperbd}
E(\det(M)^2) \le n^m\Big/\binom{n}{m}\,.
\end{equation}
Moreover, equality holds in~\eqref{eq:mean_sq_upperbd} 
iff $A$ is a Hadamard matrix.
\end{theorem}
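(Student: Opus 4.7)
The plan is to use the Cauchy--Binet formula to convert the sum of squared minors into a sum of determinants of Gram matrices, and then apply Hadamard's inequality to those Gram matrices.

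First I would parametrise $m\times m$ submatrices of $A$ by a pair $(T,S)$ of $m$-subsets of $\{1,\ldots,n\}$, where $T$ indexes the chosen rows and $S$ the chosen columns. Write $R_T$ for the $m\times n$ submatrix of $A$ consisting of the rows indexed by $T$, and $M_{T,S}$ for the $m\times m$ submatrix at row-set $T$ and column-set $S$. By Cauchy--Binet applied to $R_T$ and $R_T^{\,T}$,
\[
  \det(R_T R_T^{\,T}) \;=\; \sum_{S} \det(M_{T,S})^2,
\]
where $S$ ranges over all $m$-subsets of $\{1,\ldots,n\}$. Summing over all $T$ then gives
\[
  \sum_{M\in S_m(A)} \det(M)^2 \;=\; \sum_T \det(R_T R_T^{\,T}).
\]

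Next I would bound each term $\det(R_T R_T^{\,T})$. Since the entries of $A$ lie in $\{\pm 1\}$, the diagonal entries of the Gram matrix $R_T R_T^{\,T}$ are all equal to $n$. As $R_T R_T^{\,T}$ is positive semidefinite, Hadamard's inequality yields
\[
  \det(R_T R_T^{\,T}) \;\le\; \prod_{i\in T} (R_T R_T^{\,T})_{ii} \;=\; n^m.
\]
Summing over the $\binom{n}{m}$ choices of $T$ and dividing by the total number $\binom{n}{m}^2$ of $m\times m$ submatrices of $A$ immediately gives the desired bound \eqref{eq:mean_sq_upperbd}.

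For the equality clause, I would trace back the conditions. Equality in Hadamard's inequality for a positive semidefinite matrix forces it to be diagonal, so equality throughout requires $R_T R_T^{\,T}$ to be diagonal for \emph{every} $m$-subset $T$. This means $\langle a_i,a_j\rangle=0$ whenever two distinct rows $a_i,a_j$ lie together in some $T$; since $m\ge 2$ and $n\ge m$, every pair of distinct rows sits in some such $T$, so all pairs of rows of $A$ are orthogonal. An $n\times n$ $\{\pm 1\}$-matrix with pairwise orthogonal rows is by definition Hadamard, and conversely Hadamard matrices satisfy $R_T R_T^{\,T}=nI_m$, giving equality. The cleanest part of the argument is the chain Cauchy--Binet $\to$ Hadamard's inequality; the only real care needed is in the equality analysis, specifically in observing that the hypothesis $m>1$ is exactly what is required to promote ``diagonal Gram matrix for every $T$'' to ``every pair of rows orthogonal''.
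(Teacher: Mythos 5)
Your proposal is correct and follows essentially the same route as the paper: Cauchy--Binet applied to each $m\times n$ row-submatrix, Hadamard's inequality bounding each Gram determinant by $n^m$, summation over the $\binom{n}{m}$ row-sets, division by $\binom{n}{m}^2$, and the same equality analysis using $m\ge 2$ to conclude that all rows of $A$ are pairwise orthogonal. No gaps; your explicit remark that the positive diagonal entries make the equality case of Hadamard's inequality force a diagonal Gram matrix is a slightly more careful rendering of the same argument.
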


\pagebreak[3]
\begin{proof}
Consider the $m \times n$ matrix $B$ formed by taking
any $m$ rows of $A$, and apply the Cauchy-Binet formula to $B$,
obtaining
\begin{equation}	\label{eq:sum_over_subset}
\det(BB^T) = \sum_{M \in S_m(B)} \det(M)^2.
\end{equation}
From Hadamard's inequality~\cite{Hadamard}, the
left side of~\eqref{eq:sum_over_subset} is bounded above by $n^m$,
with equality occurring iff the rows of $B$ are orthogonal.
Thus
\[\sum_{M \in S_m(B)} \det(M)^2 \le n^m.\]
Summing over all $\binom{n}{m}$ ways in which we can choose $B$,
we obtain
\[\sum_{M \in S_m(A)} \det(M)^2 \le n^m \binom{n}{m}\,.\]
Now, dividing by $|S_m(A)| = \binom{n}{m}^2$ to give the mean value
over all submatrices of order $m$, we obtain the
inequality~\eqref{eq:mean_sq_upperbd}. It is clear from the proof
that equality occurs in~\eqref{eq:mean_sq_upperbd} iff the
rows of $B$ are pairwise orthogonal for all choices of $B$. 
Since $m \ge 2$, this 
implies that the rows of $A$ are pairwise orthogonal, and hence
$A$ is a Hadamard matrix.
\end{proof}

\section{Corollaries}	\label{sec:cor}

Tur\'an~\cite{Turan} showed that the expected value of $\det(A)^2$ for 
$\{\pm1\}$ matrices $A$ of order $m$, chosen uniformly at random, is
$m!$.
Corollary~\ref{cor:m_factorial} shows that, for submatrices $M$ of 
Hadamard matrices, the mean value
of $\det(M)^2$ is always greater
than the expected value for random $\{\pm1\}$ matrices,
excluding the trivial case $m=1$ for which equality occurs.
\begin{corollary}	\label{cor:m_factorial}
Let $H$ be a Hadamard matrix of order $n \ge m > 1$.
Then the mean value $E(\det(M)^2)$
of $\det(M)^2$, taken over all $M \in S_m(H)$, satisfies
\[E(\det(M)^2) > m!\]
\end{corollary}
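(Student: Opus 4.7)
The plan is to combine Theorem~\ref{thm:mean_detsq} with an elementary comparison of factorial-like products. Since $H$ is Hadamard, Theorem~\ref{thm:mean_detsq} gives equality in~\eqref{eq:mean_sq_upperbd}, so
\[
E(\det(M)^2) \;=\; \frac{n^m}{\binom{n}{m}}
\;=\; \frac{m!\,n^m}{n(n-1)(n-2)\cdots(n-m+1)}\,.
\]
Thus the desired inequality $E(\det(M)^2) > m!$ is equivalent to
\[
n^m \;>\; n(n-1)(n-2)\cdots(n-m+1).
\]

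First I would factor the right-hand side and observe that each of its $m$ factors is at most $n$, while the factor $n-1$ is strictly less than $n$ (this uses $m \ge 2$). Therefore the product is strictly less than $n \cdot n \cdots n = n^m$, which yields the strict inequality. Substituting back gives $E(\det(M)^2) > m!$ as required.

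The main ``obstacle'' is really just to verify that the inequality is \emph{strict}, and this is handled precisely by the hypothesis $m > 1$: if $m = 1$ then both sides of~\eqref{eq:mean_sq_upperbd} reduce to $1 = 1!$, so the strict inequality genuinely fails in the trivial case, matching the corollary's exclusion. I would briefly remark on this to explain why the case $m=1$ is excluded from the statement. Apart from that, the argument is a one-line consequence of Theorem~\ref{thm:mean_detsq}.
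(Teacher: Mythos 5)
Your proof is correct and follows essentially the same route as the paper: both invoke the equality case of Theorem~\ref{thm:mean_detsq} for Hadamard matrices and then observe that $n^m$ strictly exceeds the falling factorial $n(n-1)\cdots(n-m+1)$ when $m\ge 2$ (the paper phrases this as $n^m/\binom{n}{m}=m!\prod_{k=1}^{m-1}(1-k/n)^{-1}>m!$). Your added remark explaining why $m=1$ is excluded is accurate and a nice touch, though not required.
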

\begin{proof}
From Theorem \ref{thm:mean_detsq},
\[
E(\det(M)^2) = n^m\Big/\binom{n}{m} =
m!\prod_{k=1}^{m-1}\left(1-\frac{k}{n}\right)^{-1} >\; m!
\]
\end{proof}

\begin{definition}
Let $A$ be a square $\{\pm 1\}$ matrix of order $n \ge m \ge 1$.
Then
\begin{center}
$Z(m,A)$ is the number of zero minors of order~$m$ of $A$, and\\
$Y(m,A)$ is the number of nonzero minors of order~$m$ of $A$. 
\end{center}
\end{definition}

\begin{corollary} 	\label{cor:ineq_YZ}
Let $A$ be a square $\{\pm 1\}$ matrix of order $n \ge m > 1$.
Then 
\[Y(m,A) \le 4\left(\frac{n}{4}\right)^m\binom{n}{m}\] 
and
\[Z(m,A) \ge \binom{n}{m}\left\{\binom{n}{m} - 
  4\left(\frac{n}{4}\right)^m\right\}.
\]
Moreover, if $m \le 3$, then equality occurs iff $A$ is a Hadamard
matrix.
\end{corollary}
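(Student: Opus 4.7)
The plan is to combine Theorem~\ref{thm:mean_detsq} with the classical divisibility fact that every minor of order~$m$ of a $\{\pm 1\}$ matrix is an integer divisible by $2^{m-1}$. This follows by subtracting the first row of the $m\times m$ submatrix from each of the other $m-1$ rows, rendering those entries even, and factoring a $2$ out of each of the $m-1$ affected rows. Consequently every \emph{nonzero} $M \in S_m(A)$ satisfies $\det(M)^2 \ge 4^{m-1}$.

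First I would re-apply the summed form of Theorem~\ref{thm:mean_detsq} (as derived in its proof), namely
\[
\sum_{M \in S_m(A)} \det(M)^2 \le n^m \binom{n}{m},
\]
and bound the left-hand side from below by the contribution of the nonzero minors alone, which is at least $Y(m,A)\cdot 4^{m-1}$. Solving for $Y(m,A)$ yields
\[
Y(m,A) \le \frac{n^m \binom{n}{m}}{4^{m-1}} = 4\left(\frac{n}{4}\right)^m \binom{n}{m},
\]
which is the first inequality. Since $Y(m,A) + Z(m,A) = |S_m(A)| = \binom{n}{m}^2$, subtracting immediately gives the stated lower bound on $Z(m,A)$.

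For the equality clause, the two inequalities used above force both (a) equality in Theorem~\ref{thm:mean_detsq}, so $A$ is Hadamard, and (b) every nonzero $M \in S_m(A)$ has $\det(M)^2 = 4^{m-1}$ exactly. The main observation, and the only mildly subtle step of the proof, is that when $m \le 3$ condition (b) is automatic: Hadamard's inequality $|\det(M)| \le m^{m/2}$ combined with the divisibility of $|\det(M)|$ by $2^{m-1}$ leaves only the possibilities $|\det(M)| \in \{0,\,2^{m-1}\}$, since $|\det(M)| \le 2$ when $m=2$ and $|\det(M)| \le 3\sqrt{3} < 8$ when $m=3$. Hence for $m \le 3$, equality is equivalent to $A$ being a Hadamard matrix, as claimed. (For $m \ge 4$ intermediate values such as $|\det(M)| = 2^m$ become admissible, which is why this equality characterisation does not extend.)
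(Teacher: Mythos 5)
Your proposal is correct and follows essentially the same route as the paper: the divisibility of order-$m$ minors by $2^{m-1}$, the summed bound $\sum_{M\in S_m(A)}\det(M)^2 \le n^m\binom{n}{m}$ from Theorem~\ref{thm:mean_detsq}, the identity $Y(m,A)+Z(m,A)=\binom{n}{m}^2$, and for $m\le 3$ the observation that $4^{m-1}$ is the only possible nonzero value of $\det(M)^2$, so that equality reduces to the equality case of Theorem~\ref{thm:mean_detsq}. Your justifications of the two auxiliary facts (row subtraction for the divisibility, Hadamard's inequality for the uniqueness of the nonzero value when $m\le 3$) are merely more explicit versions of what the paper leaves as known.
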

\begin{proof}
Using a well-known mapping from $\{\pm1\}$ matrices of order $m$
to $\{0,1\}$-matrices of order $m-1$,
it is easy to prove that
the determinant of an order~$m$ $\{\pm1\}$ matrix is divisible by $2^{m-1}$.
Thus, each nonzero minor of order~$m$ has square at least $4^{m-1}$,
and
\begin{equation}	\label{eq:ineq_Y1}
\sum_{M\in S_m(A)} \det(M)^2 \ge 4^{m-1}Y(m,A).
\end{equation}
However, from Theorem~\ref{thm:mean_detsq} we have
\[\sum_{M\in S_m(A)} \det(M)^2 \le n^m\binom{n}{m}\,.\]
Thus $4^{m-1}Y(m,A) \le n^m\binom{n}{m}$, which gives the inequality
for $Y(m,A)$.  The inequality for $Z(m,A)$ follows from the observation
that
\[
Y(m,A) + Z(m,A) = \binom{n}{m}^2,
\]
since the total number of minors of order $m$ is $\binom{n}{m}^2$.
Finally, suppose that $1 < m \le 3$. Then there is only one
nonzero value of $\det(M)^2$, namely $4^{m-1}$. Thus, equality occurs
in~\eqref{eq:ineq_Y1}, and the last sentence of the corollary follows from
the last sentence of Theorem~\ref{thm:mean_detsq}.
\end{proof}

Corollary~\ref{cor:singular_submats} shows that a sufficiently large
$\{\pm1\}$ matrix always has singular submatrices of order $m \le 6$.
In fact, such submatrices occur with positive density at least $p_m$,
where $p_m$ is given in Table~$1$.
\begin{table}[hb]
\begin{center}
\begin{tabular}{|c|c|c|c|c|}
\hline
$m$	& $p_m$	& $\widehat{p}_m$	& $n_0(m)$	& $2^{m-1}+1$\\
\hline
2	& 0.5000 & 0.5000 &	3	&	3\\
3	& 0.6250 & 0.6250 &	5	&	5\\
4	& 0.6250 & 0.5898 &	8	&	9\\
5	& 0.5312 & 0.5001 &	15	&	17\\
6	& 0.2969 & 0.3924 &	45	&	33\\
\hline
\end{tabular}
\caption{Lower bound on zero minor probability $p_m$,}
and threshold $n_0(m)$, see Corollary~\ref{cor:singular_submats}.\\
For $p_m$, $\widehat{p}_m$ see eqns.~\eqref{eq:p_m}--\eqref{eq:p_mhat}.
\end{center}
\end{table}
\begin{corollary}	\label{cor:singular_submats}
Let $A$ be a $\{\pm 1\}$ matrix of order $n$, and suppose $2 \le m \le 6$.
Then $A$ has a singular submatrix of order $m$ if $n \ge n_0(m)$,
where $n_0(m)$ is as in Table~$1$.
\end{corollary}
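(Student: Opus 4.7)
The plan is to deduce the existence of a singular $m \times m$ submatrix directly from Corollary~\ref{cor:ineq_YZ}. The lower bound supplied by that corollary,
\[
Z(m,A) \ge \binom{n}{m}\left\{\binom{n}{m} - 4\left(\frac{n}{4}\right)^m\right\},
\]
is strictly positive (and therefore forces at least one singular submatrix) precisely when
\[
4^{m-1}\binom{n}{m} > n^m.
\]
My first step is to observe that the ratio $4^{m-1}\binom{n}{m}/n^m = (4^{m-1}/m!)\prod_{k=0}^{m-1}(1-k/n)$ is strictly increasing in $n$ for $n \ge m$, since each factor $1 - k/n$ increases with $n$. Hence for each fixed $m$ there is a unique integer threshold at which the inequality first holds, and it suffices to identify that threshold with $n_0(m)$ for $m = 2, \ldots, 6$.

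The body of the proof is then a brief case-by-case arithmetic check: at $n = n_0(m)$ the inequality holds, while at $n = n_0(m) - 1$ it fails. For $m = 2$ the inequality reduces to $n > 2$, giving $n_0(2) = 3$; the cases $m = 3, 4, 5$ are routine substitutions from the tabulated values. The one delicate case is $m = 6$, where at $n = 45$ the two sides agree to within roughly half a percent ($4^{5}\binom{45}{6} = 8\,340\,541\,440$ versus $45^{6} = 8\,303\,765\,625$), so the comparison must be carried out in exact integer arithmetic rather than with floating-point approximations.

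As a byproduct one should recover the density bound $p_m$ in Table~1: dividing the lower bound on $Z(m,A)$ through by the total count $\binom{n}{m}^2$ of minors of order $m$ gives a lower bound on the density of zero minors that is monotone in $n$ (by the same calculation as above) and tends to $1 - m!/4^{m-1}$ as $n \to \infty$, matching the tabulated values of $p_m$. The main obstacle in the whole argument is really just the bookkeeping at $m = 6$; no new conceptual ingredient beyond Corollary~\ref{cor:ineq_YZ} is required.
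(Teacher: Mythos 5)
Your argument is correct and is essentially the paper's proof: you deduce from Corollary~\ref{cor:ineq_YZ} that a singular $m\times m$ submatrix exists whenever $\binom{n}{m} > 4(n/4)^m$, and then verify this numerically at the tabulated thresholds (your explicit monotonicity observation on $4^{m-1}\binom{n}{m}/n^m$ just makes precise the paper's ``a computation shows that we need $n \ge n_0(m)$''). The exact-arithmetic check at $m=6$, $n=45$ matches, so there is nothing to add.
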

\begin{proof}
$A$ has a singular submatrix of
order $m$ iff $Z(m,A) > 0$, and from Corollary~\ref{cor:ineq_YZ} a
sufficient condition for this is that
\begin{equation}	\label{eq:ineq_fac}
\binom{n}{m} > 4\left(\frac{n}{4}\right)^m.
\end{equation}
Since $m! < 4^{m-1}$ for $2 \le m \le 6$ 
we see that~\eqref{eq:ineq_fac} holds for $2 \le m \le 6$
provided that $n$ is sufficiently large.  In fact, a computation shows
that we need $n \ge n_0(m)$, where $n_0(m)$ is given in Table~1.
\end{proof}
\begin{remark}	\label{remark:pigeonhole}
{\rm 
Consider $A$ as in Corollary~\ref{cor:singular_submats}.
If $n > 2^m$ then, by Dirichlet's ``pigeonhole'' principle, 
any $m\times n$ submatrix $B$ of $A$ must have two
identical columns, so $A$ must have a singular $m \times m$ submatrix.
In fact, by normalising the first row of $B$ to be 
$(+1,+1,\ldots,+1)$, 
the statement is true for $n > 2^{m-1}$. Comparing
$n_0(m)$ and $2^{m-1}+1$ (see Table~$1$), we see that 
Corollary~\ref{cor:singular_submats} gives a slightly stronger result
for $m \in \{4,5\}$. Also, the proof of Corollary~\ref{cor:singular_submats}
shows that the density of singular submatrices as $n \to \infty$
is at least
\begin{equation}	\label{eq:p_m}
p_m =
 \lim_{n\to\infty}\left(1-4\left(\frac{n}{4}\right)^m\Big/\binom{n}{m}\right)
  = 1 - 4^{1-m}m!\,.
\end{equation} 
Using an extension of the argument above that used the pigeonhole principle,
we obtain a corresponding density
\begin{equation}	\label{eq:p_mhat}
\widehat{p}_m = 1 - \prod_{k=1}^{m-1}(1-2^{1-m}k)\,.
\end{equation}
Table~$1$ gives the values of $\widehat{p}_m$ for $2 \le m \le 6$
to 4 decimal places;
we see that $p_m > \widehat{p}_m$ for 
$4 \le m \le 5$.	
\hspace*{\fill}[End of remark~\ref{remark:pigeonhole}.]
}
\end{remark}

The frequencies of small singular submatrices of Hadamard matrices are given
in Corollary~\ref{cor:small_minors}.
The corollary is restricted to $m \le 3$ because
for $m>3$ we find by computation that
$Z(m,H)$ depends on the Hadamard equivalence class of $H$.
For example, this is true if $n=16$ and $m=4$,
when there are four possible values of $Z(m,H)$. 
It is straightforward to prove
Corollary~\ref{cor:small_minors} by enumeration of
the singular submatrices of order $m \in \{2,3\}$, 
but we give a shorter proof using Corollary~\ref{cor:ineq_YZ}.

\begin{corollary}	\label{cor:small_minors}
Let $H$ be a Hadamard matrix of order $n$. Then
\begin{equation}	\label{eq:Z2}
Z(2,H) = n^2(n-1)(n-2)/8,  \;\;\text{and}
\end{equation}
\begin{equation}	\label{eq:Z3}
Z(3,H) = n^2(n-1)(n-2)(n-4)(5n-4)/288.
\end{equation}
\end{corollary}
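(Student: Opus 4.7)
The plan is to exploit the equality case of Corollary~\ref{cor:ineq_YZ}. Since $H$ is a Hadamard matrix and $m \in \{2,3\}$ satisfies $1 < m \le 3$, the last sentence of Corollary~\ref{cor:ineq_YZ} gives equality throughout, yielding the exact value
\[
Y(m,H) = 4\left(\frac{n}{4}\right)^{\!m}\binom{n}{m}.
\]
Then I would extract $Z(m,H)$ from the identity $Y(m,H) + Z(m,H) = \binom{n}{m}^2$ used in the proof of Corollary~\ref{cor:ineq_YZ}, obtaining
\[
Z(m,H) = \binom{n}{m}^{\!2} - 4\left(\frac{n}{4}\right)^{\!m}\binom{n}{m}.
\]

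For $m=2$, substituting $\binom{n}{2}=n(n-1)/2$ and $4(n/4)^2 = n^2/4$ gives $Z(2,H) = n^2(n-1)^2/4 - n^3(n-1)/8$, which I would factor as $n^2(n-1)[2(n-1)-n]/8 = n^2(n-1)(n-2)/8$, proving~\eqref{eq:Z2}. For $m=3$, I would factor out $n^2(n-1)(n-2)$ from the two terms and reduce the problem to showing that
\[
\frac{(n-1)(n-2)}{36} - \frac{n^2}{96} = \frac{5n^2-24n+16}{288} = \frac{(n-4)(5n-4)}{288},
\]
which is an elementary polynomial identity verified by expanding $(n-4)(5n-4)$. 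Combining with the factored prefix yields~\eqref{eq:Z3}.

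There is no genuine obstacle here beyond routine algebra; the whole point is that the hard inequality work was already done in Corollary~\ref{cor:ineq_YZ}, whose equality clause is precisely what converts an upper bound on $Y(m,H)$ into the exact count. The only care required is the arithmetic check that the common denominator $288$ and the factorisation $5n^2-24n+16=(n-4)(5n-4)$ come out cleanly, which they do.
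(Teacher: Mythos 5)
Your proof is correct and follows exactly the paper's route: the paper also obtains \eqref{eq:Z2} and \eqref{eq:Z3} by invoking the equality clause of Corollary~\ref{cor:ineq_YZ} for $m\in\{2,3\}$ and then simplifying $\binom{n}{m}^2-4\left(\frac{n}{4}\right)^m\binom{n}{m}$, which is precisely your calculation (and your algebra, including the factorisation $5n^2-24n+16=(n-4)(5n-4)$, checks out).
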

\begin{proof}
This is just the last part of Corollary~\ref{cor:ineq_YZ}, where
we have explicitly computed and simplified the expressions
for $Z(m,H)$ in the cases $m=2$ and $m=3$.
\end{proof}

\pagebreak[3]
\begin{remark}
{\rm
We expect random $\{\pm 1\}$-matrices of order $2$ to be singular with
probability $1/2$, and matrices of order $3$ to be singular 
with probability $5/8$,
see~\cite{Kahn,OEIS}. 
These probabilities agree with the limiting probabilities that we obtain from
Corollary~\ref{cor:small_minors} as $n \to \infty$. More
precisely,
\[
Z(2,H)\Big/\binom{n}{2}^2 = \frac12 - O\left(\frac{1}{n}\right) 
\;\text{ and }\;
Z(3,H)\Big/\binom{n}{3}^2 = \frac{5}{8} - O\left(\frac{1}{n}\right)\,.
\]
In this sense the minors of order $2$ and $3$ of Hadamard matrices
of order~$n$
behave like the minors of random $\{\pm1\}$ matrices in the limit
as $n\to\infty$.
}
\end{remark}
\begin{remark}
{\rm
From
Sz\"oll\H{o}si's theorem~\cite[Proposition 5.5]{Szollosi10}
or Jacobi's determinant identity~\cite{BS,Jacobi},
\[Z(m,H) = Z(n-m,H).\] 
Thus, the minors
of order $m \ge n-3$ of Hadamard matrices of order~$n$ take only a small
number of distinct values and certainly do \emph{not} 
behave like the minors of random 
$\{\pm 1\}$-matrices as $n \to \infty$.
Previously, such results were obtained by a more detailed study
of the structure of Hadamard matrices,
see for example~\cite{KLMS03, KMS01, LL, Sharpe07}. 
}
\end{remark}

\section{Generalisation of a result of Tur\'an}

The following theorem generalises the result of Tur\'an~\cite{Turan}
mentioned in \S\ref{sec:cor}.

\begin{theorem}	\label{thm:Turan-gen1}
If $B \in \{\pm1\}^{m \times n}$ is chosen uniformly at random, then
\[E(\det(BB^T)) = m!\binom{n}{m}.\]
\end{theorem}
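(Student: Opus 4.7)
The plan is to combine the Cauchy--Binet expansion already used in the proof of Theorem~\ref{thm:mean_detsq} with Tur\'an's original result, which tells us that the expected squared determinant of a uniformly random $\{\pm1\}$ matrix of order $m$ equals $m!$.

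First, I would apply the Cauchy--Binet formula to the random rectangular matrix $B$, writing
\[
\det(BB^T) \;=\; \sum_{M \in S_m(B)} \det(M)^2,
\]
where here $S_m(B)$ denotes the set of $m \times m$ submatrices obtained by choosing $m$ of the $n$ columns of $B$; there are exactly $\binom{n}{m}$ such submatrices. Next, I would take expectations of both sides and move the expectation inside the (finite) sum by linearity, obtaining
\[
E(\det(BB^T)) \;=\; \sum_{M \in S_m(B)} E(\det(M)^2).
\]

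The key observation is that each fixed column selection picks out an $m \times m$ submatrix $M$ whose $m^2$ entries are an independent uniformly random choice of signs, simply because the entries of $B$ themselves are i.i.d.\ uniform $\pm 1$. Thus every $M$ in the sum is distributed as a uniformly random $\{\pm 1\}$ matrix of order $m$, and Tur\'an's theorem gives $E(\det(M)^2) = m!$ for each summand. Substituting yields
\[
E(\det(BB^T)) \;=\; \binom{n}{m}\, m!,
\]
as desired.

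I do not foresee a genuine obstacle: the only subtlety is noting that although the submatrices $M$ share columns with one another (so they are not jointly independent), linearity of expectation requires only that each individual $M$ be marginally uniformly distributed, which it is. In this sense the theorem is essentially a one-line consequence of Cauchy--Binet together with Tur\'an's identity, in parallel with the role Cauchy--Binet played in Theorem~\ref{thm:mean_detsq}.
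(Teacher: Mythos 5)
Your proposal is correct and follows essentially the same argument as the paper: apply Cauchy--Binet to expand $\det(BB^T)$ as a sum of $\binom{n}{m}$ squared minors, take expectations by linearity, and invoke Tur\'an's result that each minor contributes $m!$. Your added remark that only marginal uniformity of each submatrix is needed (not joint independence) is a useful clarification, but the route is the same.
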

\begin{proof}
The proof uses the Cauchy-Binet theorem much
as in the proof of Theorem~\ref{thm:mean_detsq}. We write
\[\det(BB^T) \;\; = \sum_{M \in S_m(B)} \det(M)^2,\]
where $S_m(B)$ is the set of all $m \times m$ submatrices of $B$,
and take expectations.  There are $\binom{n}{m}$ choices of $M$,
and by Tur\'an's theorem each choice contributes $m!$ to the expectation.
\end{proof}
\begin{remark}
{\rm
Changing notation,
de Launey and Levin~\cite[Proposition 2]{LL}
says that, if $H$ is a Hadamard
matrix of order $h$, and $B$ is chosen uniformly at random from the 
$n \times m$ submatrices of $H$, then
\[E(\det(B^T B)) = h^m \binom{n}{m}\Big/\binom{h}{m}.\]
The right-hand-side tends to $m!\binom{n}{m}$ as $h \to \infty$.
Thus, in this sense, fixed-size submatrices of large Hadamard matrices
tend to behave like random matrices.
}
\end{remark}

\end{document}